\numberwithin{equation}{section}
\newtheorem{thm}{Theorem}[section]
\newtheorem{theorem}[thm]{Theorem}
\newtheorem{lemma}[thm]{Lemma}
\newtheorem{corollary}[thm]{Corollary}
\newtheorem{proposition}[thm]{Proposition}
\newtheorem{remark}[thm]{Remark}
\newtheorem{definition}[thm]{Definition}
\newtheorem{example}[thm]{Example}
\newtheorem{defn-thm}[thm]{Definition-Theorem}
\newcommand{\btheorem}{\begin{theorem}}
\newcommand{\etheorem}{\end{theorem}}
\newcommand{\bproposition}{\begin{proposition}}
\newcommand{\eproposition}{\end{proposition}}
\newcommand{\bdefinition}{\begin{definition}}
\newcommand{\edefinition}{\end{definition}}
\newcommand{\bcorollary}{\begin{corollary}}
\newcommand{\ecorollary}{\end{corollary}}
\newcommand{\bproof}{\begin{proof}}
\newcommand{\eproof}{\end{proof}}
\newcommand{\bremark}{\begin{remark}}
\newcommand{\eremark}{\end{remark}}
\newcommand{\eexample}{\end{example}}
\newcommand{\bexample}{\begin{example}}
\newcommand{\elemma}{\end{lemma}}
\newcommand{\blemma}{\begin{lemma}}
\renewcommand{\phi}{\varphi}
\newcommand{\ee}{\end{eqnarray*}}
\newcommand{\be}{\begin{eqnarray*}}
\newcommand{\beq}{\begin{equation}}
\newcommand{\eeq}{\end{equation}}
\newcommand{\bd}{\begin{enumerate}}
\newcommand{\ed}{\end{enumerate}}
\begin{document}

%
%
%
%
%
%
%
%
%

\title[Non-maximal volume growth]
 {Complete open K$\ddot{a}$hler manifolds with nonnegative bisectional curvature and non-maximal volume growth}

\author{James W. Ogaja}

\address{
University of Califonia Riverside\\
900 University Avenue\\
Riverside, CA 92521, USA}

\email{jogaj001@ucr.edu}

\subjclass{Primary 53C21; Secondary 53C55 }

\keywords{Cone of rays, volume growth comparison, nonnegative Ricci curvature, positive bisectional curvature, K$\ddot{a}$hler manifold, Stein manifold, Busemann function }

\date{January 1, 2004}

\begin{abstract}
It is still an open problem that a complete open K$\ddot{a}$hler manifold with positive bisectional curvature is Stein. This paper partially resolve the problem by putting a restriction to volume growth condition. The partial solution here improves the observation in ([8], page 341). The improvement is based on assuming a weaker volume growth condition that is is not sufficiently maximal.  

\end{abstract}

\maketitle
\section{Introduction}
One of the most useful tool in studying structures of nonnegatively curved complete open manifolds is the Busemann function. The spherical  Busemann function is defined as  $$\displaystyle{b_p(x) = \lim_{r\to\infty}\{r - d(x, \partial(B(p,r))\}},$$

In the process of proving soul theorem in [3], Cheeger and Gromoll proved that a complete open Riemaniann manifold with nonnegative sectional curvature admits a convex and exhaustion Busemann function, $b_p$.

It still remains unknown whether complete open manifolds with nonnegative Ricci curvature admit exhaustion Busemann function, except with the restriction to maximum volume growth as was proved by Shen in [7]. In this paper we drop the maximum volume growth condition and adapt a weaker condition.

Let $S_pM\subset T_pM$ be a unit tangent sphere in the tangent space $T_pM$ for a point $p\in M$. For any subset $N\subset S_pM$ , define $$C(N)=\{ q\in M \ | \ \text{there is a minimizing geodesic $\gamma$ from p to q such that} \ \gamma'(0)\in N\}$$ to be the cone over $N$. The restriction of a geodesic ball of radius $r$ centered at $p$ to C(N) is denoted by $$B_N(p,r) = B(p,r)\cap C(N)$$.

 Let $\Sigma = \{ v\subset S_pM \ | \ \text{exp}_p(rv) : [0,\infty)\to M  \text{ is a ray}\}$. A cone of rays is defined by $C(\Sigma)$. Consequently, $$B_{\Sigma}(p,r) = B(p,r) \cap C(\Sigma).$$

\vspace{0.1in}

From lemma 4 in [5] we have

\begin{lemma}[{\cite[Ordway-Stephens-Yang]{OSY}}]
 Let $M^n$ be a complete open manifold with $\text{Ric}_{M} \geq 0$. Suppose that $M$ has a maximum volume growth i.e $$\lim_{r\to\infty} r^{-n}Vol(B(p,r)) = \alpha_M, \ \alpha_M > 0$$ then $$\lim_{r\to\infty}r^{-n}Vol(B_\Sigma(p,r)) = \alpha_M$$
\end{lemma}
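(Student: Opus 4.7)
The plan is to work in geodesic polar coordinates at $p$ and to control the difference $Vol(B(p,r)) - Vol(B_\Sigma(p,r))$ by Bishop's volume comparison. Let $A(t,v)$ denote the area density of $\exp_p$ in the radial direction $v \in S_pM$, and let $c(v) \in (0,\infty]$ be the cut distance in direction $v$. By the very definition of a ray, $v \in \Sigma$ if and only if $c(v) = \infty$, so polar integration expresses the two volumes as
\[
Vol(B(p,r)) = \int_{S_pM}\!\!\int_0^{\min(r,c(v))}\!\! A(t,v)\,dt\,d\sigma(v),\quad
Vol(B_\Sigma(p,r)) = \int_{\Sigma}\!\!\int_0^{r}\!\! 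A(t,v)\,dt\,d\sigma(v),
\]
so that
\[
Vol(B(p,r)) - Vol(B_\Sigma(p,r)) = \int_{S_pM\setminus\Sigma}\!\!\int_0^{\min(r,c(v))}\!\! A(t,v)\,dt\,d\sigma(v).
\]

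Next, Bishop's area comparison under $\text{Ric}_M \geq 0$ gives $A(t,v)\leq t^{n-1}$ whenever $t < c(v)$. Evaluating the inner integral yields
\[
0 \;\leq\; \frac{Vol(B(p,r)) - Vol(B_\Sigma(p,r))}{r^n}
\;\leq\; \frac{1}{n}\int_{S_pM\setminus\Sigma}\min\!\left(1,\frac{c(v)}{r}\right)^{\!n}\!d\sigma(v).
\]
For each $v\in S_pM\setminus\Sigma$ we have $c(v)<\infty$, so the integrand tends pointwise to $0$ as $r\to\infty$; being bounded by $1$ on the finite measure space $S_pM$, it satisfies the hypotheses of dominated convergence, which forces the right-hand integral to vanish in the limit.

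Combining this with the hypothesis $r^{-n}Vol(B(p,r)) \to \alpha_M$, a squeeze produces $r^{-n}Vol(B_\Sigma(p,r)) \to \alpha_M$, as desired. The only mild subtleties are measurability of $c(v)$, so that $\Sigma$ and $S_pM\setminus\Sigma$ are genuine measurable sets, and the validity of the polar-coordinate volume formula beyond the injectivity radius; both are standard facts about the cut locus. It is worth noting that the maximum volume growth assumption enters only at the final step — the error estimate itself uses only $\text{Ric}_M \geq 0$, which suggests that the statement extends unchanged to any growth regime in which the limit $\lim_{r\to\infty} r^{-n} Vol(B(p,r))$ exists.
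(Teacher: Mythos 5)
Your proof is correct: the decomposition of the ball by initial directions, the identification of $\Sigma$ with the directions of infinite cut distance, the Bishop bound $A(t,v)\le t^{n-1}$, and dominated convergence over $S_pM\setminus\Sigma$ together give $r^{-n}\bigl(Vol(B(p,r))-Vol(B_\Sigma(p,r))\bigr)\to 0$, which yields the claim. Note that the paper itself offers no proof of this lemma (it is quoted from Ordway--Stephens--Yang [5]), and your argument is essentially the standard one given in that source, so there is nothing to reconcile beyond the routine points you already flagged (measurability of $c(v)$ and the polar-coordinate formula up to the cut locus).
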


\vspace{0.1in}

By limit properties, we obtain the following corollary

\begin{corollary}
 Let $M^n$ be a complete open manifold with $\text{Ric}_{M} \geq 0$. Suppose that $M$ has a maximum volume growth. Then $$Vol(B_{\Sigma}(p,r))\sim Vol(B(p,r)),$$
\end{corollary}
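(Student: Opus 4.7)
The plan is to reduce the claim to the limit statement supplied by the previous lemma together with the hypothesis of maximum volume growth, and then extract the asymptotic equivalence from the fact that both quantities have the \emph{same} positive leading term.

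First I would write down the two limits that are directly available. By the maximum volume growth hypothesis,
\[
\lim_{r\to\infty} r^{-n} \operatorname{Vol}(B(p,r)) = \alpha_M > 0,
\]
and by the lemma cited from \cite{OSY},
\[
\lim_{r\to\infty} r^{-n} \operatorname{Vol}(B_{\Sigma}(p,r)) = \alpha_M.
\]

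Next, I would form the ratio $\operatorname{Vol}(B_{\Sigma}(p,r))/\operatorname{Vol}(B(p,r))$ and rewrite it as
\[
\frac{\operatorname{Vol}(B_{\Sigma}(p,r))}{\operatorname{Vol}(B(p,r))}
=
\frac{r^{-n}\operatorname{Vol}(B_{\Sigma}(p,r))}{r^{-n}\operatorname{Vol}(B(p,r))}.
\]
Since $\alpha_M>0$, the denominator is bounded below by a positive constant for all sufficiently large $r$, so the quotient rule for limits applies and the ratio tends to $\alpha_M/\alpha_M = 1$. This is exactly the assertion that $\operatorname{Vol}(B_{\Sigma}(p,r)) \sim \operatorname{Vol}(B(p,r))$ as $r\to\infty$.

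There is essentially no obstacle here; the only subtle point to record is that the hypothesis $\alpha_M>0$ is needed to ensure the denominator stays away from zero for large $r$, which is what legitimizes taking the quotient of the two limits. Without that positivity the asymptotic equivalence would not follow from the two $r^{-n}$-scaled limits alone.
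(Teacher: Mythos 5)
Your argument is correct and is essentially the paper's own: the text derives the corollary from Lemma 1.1 simply ``by limit properties,'' which is exactly your quotient-of-limits computation, with the positivity of $\alpha_M$ justifying division. Nothing further is needed.
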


 $$\ \sim \ means \ asymptotic$$

\vspace{0.1in}

It is essential to note that nonnegative Ricci curvature ensures that the volume growth condition in corollary 1.2 above is independent of the base point: let $p,q\in M^n$ and $d = d(p,q)$. Then it is clear that $B(p, r)\subset B(q, r + d)$ and $B(q, r)\subset B(p, r + d)$.
By Bishop-Gromov volume comparison theorem, 

\begin{eqnarray*}\lim_{r\to\infty}\text{inf}\frac{Vol(B_{\Sigma}(p,r))}{Vol(B(p_1,r))}&\geq &\lim_{r\to\infty}\text{inf}\left\{\left[\frac{r}{r + d}\right]^n\frac{Vol(B_{\Sigma}(p,r + d))}{Vol(B(p_1,r))}\right\}\\&\geq & \lim_{r\to\infty}\text{inf}\left\{\left[\frac{r}{r + d}\right]^n\frac{Vol(B_{\Sigma}(q,r)}{Vol(B(p_1,r))}\right\}\\&\geq & \lim_{r\to\infty}\left[\frac{r}{r + d}\right]^n\lim_{r\to\infty}\text{inf}\frac{Vol(B_{\Sigma}(q,r)}{Vol(B(p_1,r))}\\&= & \lim_{r\to\infty}\text{inf}\frac{Vol(B_{\Sigma}(q,r)}{Vol(B(p_1,r))}\end{eqnarray*}

Likewise  \begin{eqnarray*}\lim_{r\to\infty}\text{inf}\frac{Vol(B_{\Sigma}(p,r))}{Vol(B(p_1,r))}\leq\lim_{r\to\infty}\text{inf}\frac{Vol(B_{\Sigma}(q,r)}{Vol(B(p_1,r))}\end{eqnarray*}

\begin{lemma}
Let $M^n$ be a complete open manifold with $Ric_M\geq 0$. For a fixed $p_1\in M$, the volume growth $$\lim_{r\to\infty}\text{inf} \ [Vol(B_{\Sigma}(p,r))][Vol(B(p_1,r))]^{-1} = \alpha(n)$$ is independent of the base point $p\in M$.
\end{lemma}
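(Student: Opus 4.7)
The goal is to show that
$L(p) := \liminf_{r \to \infty} Vol(B_\Sigma(p, r))/Vol(B(p_1, r))$
is independent of the base point $p$, with $p_1$ held fixed. The plan mirrors the chain of inequalities displayed just above the statement: I would fix arbitrary $p, q \in M$, set $d = d(p, q)$, prove $L(p) \geq L(q)$, and then obtain the reverse inequality by exchanging the roles of $p$ and $q$. Once both directions are in hand, the common value depends only on the manifold and justifies the notation $\alpha(n)$.

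The proof rests on two ingredients. First, Bishop-Gromov volume comparison along the star-shaped cone $C(\Sigma_p)$: since $\text{Ric}_M \geq 0$ and every point of $C(\Sigma_p)$ sits on a minimizing ray from $p$, I can extract the monotonicity $Vol(B_\Sigma(p, r)) \geq [r/(r+d)]^n\, Vol(B_\Sigma(p, r+d))$ by integrating the Jacobian estimate ray-by-ray. Second, a volume comparison of the form $Vol(B_\Sigma(p, r+d)) \geq Vol(B_\Sigma(q, r))$, which chains with the previous estimate. Dividing by $Vol(B(p_1, r))$, taking $\liminf$, and using $[r/(r+d)]^n \to 1$ together with the identity $\liminf(a_r b_r) \geq (\lim a_r)(\liminf b_r)$ when $\lim a_r$ exists and is positive, I obtain $L(p) \geq L(q)$. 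Swapping $p$ and $q$ closes the symmetry.

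The main obstacle is the second comparison. The cone of rays depends on the base point: although the triangle inequality immediately gives $B(q, r) \subseteq B(p, r + d)$, a ray emanating from $q$ is not a priori contained in any ray emanating from $p$, so the naive set inclusion $B_\Sigma(q, r) \subseteq B_\Sigma(p, r + d)$ is not automatic. To bridge this gap I would try one of two routes. The first is a ray-extraction argument: for any $x \in C(\Sigma_q)$, take a ray $\gamma_q$ from $q$ passing through $x$, let $v_k$ be the unit vector at $p$ pointing to $\gamma_q(t_k)$ with $t_k \to \infty$, and extract a subsequential limit $v_\infty \in \Sigma_p$; under nonnegative Ricci curvature one expects $x$ to lie on the resulting ray from $p$, at least after a controlled perturbation of $r$ by $d$. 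The second, less rigid route is to bypass a pointwise inclusion and instead bound $Vol(B_\Sigma(p, r+d))$ from below by $Vol(B_\Sigma(q, r))$ via an angular Bishop-Gromov estimate, showing that the error between the two cones is absorbed by the factor $[r/(r+d)]^n$ in the limit. Once this geometric step is secured, the rest is the routine limit manipulation already laid out in the excerpt.
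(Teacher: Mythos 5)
Your overall scheme --- prove $L(p)\geq L(q)$ via the chain $Vol(B_{\Sigma}(p,r))\geq \left[\frac{r}{r+d}\right]^{n}Vol(B_{\Sigma}(p,r+d))\geq \left[\frac{r}{r+d}\right]^{n}Vol(B_{\Sigma}(q,r))$, divide by $Vol(B(p_1,r))$, take $\liminf$, and symmetrize --- is exactly the argument displayed in the paper, and your first ingredient (Bishop--Gromov monotonicity restricted to the star-shaped cone $C(\Sigma_p)$, which is legitimate because every point of $C(\Sigma_p)$ lies on a ray from $p$) is fine, as is the limit bookkeeping. The unresolved point is the second comparison, $Vol(B_{\Sigma}(p,r+d))\geq Vol(B_{\Sigma}(q,r))$. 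You correctly single this out as the crux: the ball inclusion $B(q,r)\subset B(p,r+d)$ does not restrict to an inclusion of the cones of rays, since a point on a ray from $q$ need not lie on any ray from $p$. The paper itself simply asserts this inequality inside its displayed chain, offering only the ball inclusions and Bishop--Gromov as justification, so you are not overlooking a supplied argument; but as a standalone proof your attempt remains open at precisely this step, and neither of your two proposed repairs closes it.

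Route (a) does not work as described: the limit of minimizing segments from $p$ to $\gamma_q(t_k)$ is a ray from $p$ \emph{asymptotic} to $\gamma_q$, and an asymptotic ray has no reason to pass through the given point $x=\gamma_q(s)$ --- nonnegative Ricci curvature carries no such rigidity, and in general it fails; moreover ``perturbing $r$ by $d$'' only adjusts the radius of the ball, whereas the difficulty is whether $x$ belongs to $C(\Sigma_p)$ at all. Route (b) is not yet an argument: since $\left[\frac{r}{r+d}\right]^{n}\to 1$, that factor cannot absorb any definite proportion of volume; what would actually be needed is an estimate of the form $Vol\bigl(B_{\Sigma}(q,r)\setminus C(\Sigma_p)\bigr)=o\bigl(Vol(B(p_1,r))\bigr)$, i.e.\ that the part of the $q$-cone missed by the $p$-cone is asymptotically negligible in measure. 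That comparison of the two cones is exactly the missing content of the lemma (both in your write-up and, implicitly, in the paper's own chain), and until it is established the claimed base-point independence is not proved.
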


\vspace{0.1in}

 The converse to corollary 1.2 above is not true. In other words, the volume growth condition
\begin{eqnarray}Vol(B_{\Sigma}(p,r))\sim Vol(B(p,r)),\end{eqnarray} does not necessarily imply maximum volume growth.

 For example, the vertex 0 of a paraboloid $M\subset\mathbb{R}^{n + 1}$ has an empty cut locus. Thus volume growth condition (1.1) holds at 0 and extends to other points by lemma 1.3 above. On the other hand, as a special case of lemma 4.1 in [6], the paraboloid $M$  in $\mathbb{R}^{n + 1}$ defined by $M = \{ (x_1, x_2, \cdot \cdot \cdot, x_n, z) : z = x_1^2 + x_2^2 + \cdot \cdot \cdot + x_n^2 \}$  has a volume growth of at most $r^{\frac{n + 1}{2}}$ which is not maximal. Furthermore, we can creat a non-empty cut locus of the point $0$ at the same time maintaining positive curvature and manifesting volume growth conditions like that of (1.1).

\begin{example}
Consider $\mathcal{M} = \{ (x_1, x_2, \cdot \cdot \cdot, x_n, z) : z = x_1^2 + x_2^2 + \cdot \cdot \cdot + x_n^2 \}\subset\mathbb{R}^{n + 1}$. $(\mathcal{M}, ds_\mathcal{M}^2)$ is a complete open manifold with positive Ricci curvature ($n>2$). Here, $ds_M^2$ is an induced Euclidean metric. For $0\neq q\in\mathcal{M}$, let $D_l(q)$ be a geodesic ball of radius $l$ centered at $q$. Consider a smooth function $f: \overline{D_l(q)}\to\mathbb{R}$. For a small neiborhood $U$ of $\overline{D_l(q)}$, there exists a smooth function $h: \mathcal{M}\to\mathbb{R}$ such that $h|_{\overline{D_l(q)}} = f$ and $\text{supp}\ h\subset U$. For $\varepsilon>0$, denote $\mathcal{M_{\varepsilon}} = (\mathcal{M}, ds_\mathcal{M}^2 + \varepsilon hds_{\mathcal{M}}^2)$. We can choose $\varepsilon$ small enough such that the Ricci curvature remains positive throughout $\mathcal{M}$ and an extension $\gamma: [0,\infty) \to \mathcal{M_{\varepsilon}}$ of a minimizing geodesic from $0$ to $q$ leaves $D_l(q)$ and intersect a ray at a point. It follows that the cut locus of the point $0$ is no longer empty. Since only rays intersecting and neighboring $D_l(q)$ are affected in this new manifold, for a fixed $a\approx 1$, $a\leq1$, we can choose $l>0$ and $\varepsilon>0$ small enough such that $$\lim_{r\to\infty}[Vol(B_{\Sigma}(p,r))][Vol(B(p,r))]^{-1} = a$$.
\end{example}

Given two real valued functions $f,g:\mathbb{R}\to \mathbb{R}$. Denote the limit $$\lim_{r\to\infty}f(r)[g(r)]^{-1} = a, \ a>0 $$ 
if it exists by $f\sim_ag$.

Now we extend the result by Shen in [7] by replacing the maximum volume growth condition with a weaker volume growth condition.

\begin{lemma}\label{lemm:cute}
 Let $M$ be a complete open manifold with $\text{Ric}_{M} \geq 0$. Let $[9^n - 1]9^{-n}<a\leq 1$ where $n = dim_{\mathbb{R}}M$. If
$$\lim_{r\to\infty}\text{inf} \ [Vol(B_{\Sigma}(p,r))][Vol(B(p,r))]^{-1} = a,$$ then for any $t\in \mathbb{R}$, $b_p^{-1}(t)$ is compact . 
\end{lemma}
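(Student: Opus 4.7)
The plan is a proof by contradiction that quantifies how much of a large ball around $p$ the cone $C(\Sigma)$ must miss when $b_p^{-1}(t)$ fails to be compact. Assume $b_p^{-1}(t)$ is non-compact for some fixed $t$. Since every function $x \mapsto r - d(x, \partial B(p,r))$ is $1$-Lipschitz, $b_p$ is $1$-Lipschitz and in particular continuous, so $b_p^{-1}(t)$ is closed; completeness of $M$ then forces non-compactness to imply unboundedness, and we extract a sequence $\{q_i\} \subset b_p^{-1}(t)$ with $R_i := d(p, q_i) \to \infty$.

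The heart of the argument is the identity $b_p(x) = d(p,x)$ for every $x \in C(\Sigma)$. Indeed, if $x \in C(\Sigma)$ there is a minimizing geodesic $\gamma:[0,d(p,x)] \to M$ from $p$ to $x$ with $\gamma'(0) \in \Sigma$, and by definition of $\Sigma$ this $\gamma$ extends to a ray $\sigma$; using $\sigma(r) \in \partial B(p,r)$ for $r \geq d(p,x)$ yields $b_p(x) \geq d(p,x)$, while the triangle inequality gives the reverse bound. Combining this identity with the $1$-Lipschitz property of $b_p$, any $x \in C(\Sigma) \cap B(q_i,\rho)$ would satisfy
\[
R_i - \rho \;\leq\; d(p,x) \;=\; b_p(x) \;\leq\; t + \rho,
\]
forcing $\rho \geq (R_i - t)/2$. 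Equivalently, $C(\Sigma) \cap B(q_i,\rho) = \emptyset$ whenever $\rho < (R_i - t)/2$.

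For $i$ large, set $\rho_i := R_i/4$ and $s_i := R_i + \rho_i = 5R_i/4$; since $\rho_i < (R_i - t)/2$ eventually, $B(q_i,\rho_i) \subset B(p,s_i) \setminus C(\Sigma)$. Because $B(p,s_i) \subset B(q_i,s_i + R_i) = B(q_i, 9R_i/4)$, Bishop--Gromov volume comparison centered at $q_i$ (using $\text{Ric} \geq 0$) gives
\[
\frac{\text{Vol}(B(q_i, \rho_i))}{\text{Vol}(B(p, s_i))} \;\geq\; \frac{\text{Vol}(B(q_i, R_i/4))}{\text{Vol}(B(q_i, 9R_i/4))} \;\geq\; 9^{-n}.
\]
Consequently $\text{Vol}(B_\Sigma(p, s_i))/\text{Vol}(B(p, s_i)) \leq 1 - 9^{-n} = (9^n - 1)/9^n$ for all large $i$, and letting $i \to \infty$ gives $\liminf_r \text{Vol}(B_\Sigma(p,r))/\text{Vol}(B(p,r)) \leq (9^n - 1)/9^n$, contradicting the hypothesis $a > (9^n - 1)/9^n$.

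The only real obstacle is the identity $b_p = d(p,\cdot)$ on $C(\Sigma)$; once that is in hand, the conflict between a $1$-Lipschitz function that remains bounded along an escaping sequence and a function equal to the distance on $C(\Sigma)$ forces a ball of radius proportional to $R_i$ near each $q_i$ to miss the cone of rays. The specific choice $\rho_i = R_i/4$ (rather than anything up to nearly $R_i/2$) is what makes the Bishop--Gromov ratio land at exactly $9^{-n}$ and therefore matches the threshold $(9^n-1)/9^n$ in the hypothesis.
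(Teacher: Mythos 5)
Your proof is correct, and it reaches the same contradiction as the paper but by a noticeably cleaner mechanism. Both arguments share the outer skeleton: assume $b_p^{-1}(t)$ is non-compact, extract a divergent sequence in the level set, produce a metric ball of radius comparable to its distance from $p$ that is disjoint from $C(\Sigma)$, and then let Bishop--Gromov turn that ball into a definite fraction (exactly $9^{-n}$ in both cases) of a large ball centered at $p$, contradicting the hypothesis $a>(9^n-1)9^{-n}$. Where you differ is in how the ball missing the cone of rays is produced. The paper goes through the excess function, the distance $h_p(x)=d(x,R_p)$ to the union of rays, the inequality $r_p(x)-b_p(x)\leq 2h_p(x)$, the claim that $h_p(x_m)\to\infty$, balls $B(x_m,h_p(x_m))$, an auxiliary base point $x_1$, and the base-point-independence statement (its Lemma 1.3), before chaining several volume comparisons at the radii $h_m$. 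You instead prove the identity $b_p\equiv d(p,\cdot)$ on $C(\Sigma)$ (your argument for it is sound: the ray through $x$ gives $b_p(x)\geq d(p,x)$, and $d(x,\partial B(p,r))\geq r-d(p,x)$ gives the reverse), combine it with the $1$-Lipschitz property of $b_p$ to conclude $C(\Sigma)\cap B(q_i,\rho)=\emptyset$ for $\rho<(R_i-t)/2$ --- which is in effect a quantitative restatement of the paper's inequality $r_p-b_p\leq 2h_p$ at the points $q_i$ --- and then run a single Bishop--Gromov comparison centered at $q_i$ with the explicit radii $R_i/4$ and $9R_i/4$. This dispenses with $h_p$, with the need to argue $h_p(x_m)\to\infty$, and with the auxiliary point and base-point-independence lemma, and it lands directly on the threshold $(9^n-1)9^{-n}$ at the radii $s_i=5R_i/4$; the paper's route, while equivalent in outcome, is longer and leaves more intermediate steps (e.g., the divergence of $h_p(x_m)$ and the passage through $f_p(h_m)$) to the reader.
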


The following theorem is the main result in this paper:

\begin{theorem} \label{thm:cute}
Let $M$ be a complete open K$\ddot{a}$hler manifold with nonnegative bisectional curvature. Then $M$ is a Stein manifold if the followings holds
\begin{itemize}
\item[(a)]The bisectional curvature is positive outside a compact set
\item[(b)] $$Vol(B_{\Sigma}(p,r))\sim_a Vol(B(p,r)),$$ 

where $[9^{2n} - 1]9^{-2n}<a\leq1$ and $n = dim_{\mathbb{C}}M$
\end{itemize}
 \end{theorem}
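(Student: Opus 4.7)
The plan is to produce a smooth strictly plurisubharmonic exhaustion function on $M$ and then invoke Grauert's solution of the Levi problem. The raw material is the spherical Busemann function $b_p$. By hypothesis (b) and Lemma~\ref{lemm:cute} applied in real dimension $2n$, every sublevel set $b_p^{-1}((-\infty,t])$ is compact, so $b_p$ is already a continuous exhaustion of $M$.

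I would next argue plurisubharmonicity of $b_p$. On a K\"ahler manifold with nonnegative holomorphic bisectional curvature the distance-from-a-point function is plurisubharmonic in the sense of distributions (this is a Hessian-comparison argument going back to Greene--Wu and exploited by Shen in [7,8]; cut-locus points are handled by comparing from below with smooth supporting functions). Since $b_p$ is a uniform-on-compacta limit of such distance functions shifted by constants, $b_p$ is continuous and plurisubharmonic on $M$.

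Using hypothesis (a), I would then promote plurisubharmonicity to strict plurisubharmonicity away from a compact set. Positivity of the bisectional curvature on $M\setminus K$ yields a strict Hessian comparison along each ray leaving $K$, producing $i\partial\bar\partial b_p \geq c\,\omega$ as currents on $M\setminus K'$ for some compact $K'\supset K$ and some $c>0$. Apply Richberg's regularization theorem on $M\setminus K'$ to obtain a smooth strictly plurisubharmonic function $\phi_1$ agreeing with $b_p$ up to an arbitrarily small error, hence still exhausting $M$ at infinity. On a relatively compact open neighborhood of $K'$, construct (via a partition of unity applied to local squared Euclidean distances in holomorphic charts) a smooth strictly plurisubharmonic function $\phi_0$. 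A combination of the form $\phi := \chi\circ\phi_1 + \eta\,\phi_0$, with $\eta$ a cutoff supported near $K'$ and $\chi$ a convex increasing function growing fast enough at infinity to absorb the perturbation from $\phi_0$, is then a smooth strictly plurisubharmonic exhaustion of $M$, and Grauert's theorem concludes that $M$ is Stein.

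The main obstacle will be promoting the pointwise positivity of bisectional curvature on $M\setminus K$ to a \emph{uniform} lower bound on the complex Hessian of $b_p$ there. This is where the volume growth hypothesis enters in an essential way: Lemma~\ref{lemm:cute} ensures the cone of rays $C(\Sigma)$ is large enough that its complement carries vanishingly small measure at infinity, which is precisely what allows the Hessian comparison along rays to aggregate into a strict lower bound on $i\partial\bar\partial b_p$ on a cofinite region. The secondary gluing step across $\partial K'$ is routine once strictness is established outside $K'$.
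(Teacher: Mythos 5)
Your overall strategy is the same as the paper's: use Lemma 1.5 in real dimension $2n$ (nonnegative bisectional curvature gives nonnegative Ricci curvature) to make $b_p$ a continuous exhaustion, use nonnegative bisectional curvature for plurisubharmonicity, use hypothesis (a) for strictness outside a compact set, and conclude Steinness via a strictly plurisubharmonic exhaustion. The paper, however, delegates everything after the exhaustion step to H.~Wu [9]: $b_p$ is plurisubharmonic by Wu's results, and Theorem C of [9] then supplies the strictly plurisubharmonic exhaustion. Where you try to rebuild that machinery, one step genuinely fails: your construction of the auxiliary function $\phi_0$ near $K'$. A partition-of-unity combination of local functions $|z|^2$ in holomorphic charts is not plurisubharmonic --- the terms involving derivatives of the cutoffs have no sign, and near the compact core there is nothing strictly plurisubharmonic available to absorb them (if there were, you would already be most of the way to the theorem). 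The existence of a strictly plurisubharmonic function on a neighborhood of the compact set is exactly the Levi-problem-type difficulty at the core, where compact positive-dimensional analytic subsets must be excluded; this is the content of the Grauert/Wu machinery that the paper cites rather than reproves. (Note also that your glued function $\chi\circ\phi_1+\eta\,\phi_0$ is not even defined on $K'$, since $\phi_1$ was produced by Richberg only on $M\setminus K'$.)

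Your closing paragraph also misidentifies the role of hypothesis (b). The volume-growth condition enters only through Lemma 1.5, i.e., only to make the level sets of $b_p$ compact; it has nothing to do with the size of the complex Hessian. The uniform bound $i\partial\bar\partial b_p\geq c\,\omega$ on a cofinite region that you ask the volume growth to deliver is neither available in general (the positive bisectional curvature outside $K$ may decay at infinity, so the Hessian comparison gives only pointwise strictness) nor needed, since Richberg's theorem and the Grauert/Wu arguments require only local strict plurisubharmonicity. The efficient repair is the paper's own route: with nonnegative bisectional curvature $b_p$ is continuous and plurisubharmonic, with (a) it is strictly plurisubharmonic outside a compact set, and with the exhaustion property from Lemma 1.5, Theorem C of [9] yields a strictly plurisubharmonic exhaustion, hence $M$ is Stein.
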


\section{Proofs}

\bigskip

We will prove Lemma 1.5 first then Theorem 1.6.

 \begin{proof}[Proof of Lemma \ref{lemm:cute}]

 Proving  by contradiction, we assume that $b^{-1}(t)$ is non-compact and then show that the assumed volume growth condition doesn't hold.

Define the excess function for two points $p,q$ as $$e_{p,q} = d(p,x) + d(x,q) - d(p,q).$$

 By the triangle inequality, we have that 
 \begin{eqnarray}e_{p,q}(x)\leq2h(x)\end{eqnarray}

Denote $r_p(x) = d(p,x)$. Assume that the minimizing geodesic between $p$ and $q$ is part of a ray emanating from $p$. Now, taking the limit of inequality (2.1) as $q$ goes to infinity, we end up with the following inequality 

\begin{eqnarray}r_p(x) - \lim_{t\to\infty}\{t - d(x,\gamma(t))\}\leq 2h_{\gamma}(x),\end{eqnarray} where $h_{\gamma}(x)$ is a distance from $x$ to a ray $\gamma$ emenating from $p$.  Since $$r_p(x) - b_p(x)\leq r_p(x) - \lim_{t\to\infty}\{t - d(x,\gamma(t))\},$$ for each ray $\gamma$ emanating from $p$, inequality (2.2) implies that \begin{eqnarray}r_p(x) - b_p(x)\leq 2h_{\gamma}(x)\end{eqnarray} 

Let $h_p(x) = d(x,Rp)$, where $R_p$ is a union of rays emanating from $p$. Since inequality (2.3) holds for any ray $\gamma$, we have that

\begin{eqnarray}r_p(x) - b_p(x)\leq2 h_p(x)\end{eqnarray}

Next, note that $$C(\Sigma)\cap C(\Sigma^c) = \emptyset.$$

Therefore, for any $r>0$ and $p\in M$, we have that

$$B_{\Sigma}(p,r)\cap B_{\Sigma^c}(p,r) = \emptyset$$

 Observe that $B(x, h_p(x))\subset C(\Sigma^c)$. It follows that $B(x, h_p(x))\subset B_{\Sigma^c}(p, r_p(x) +  h_p(x))$. 

\vspace{0.1in}

Since $b_p$ is exhaustion whenever $h_p$ is bounded, we assume that $h_p$ is unbounded. Due to noncompactness of $b_p^{-1}(t)$, we can construct a diverging  sequence $\{x_m\}\subset b_p^{-1}(t)$. Consequently, $\{h_p(x_m)\}$ is a divergence sequence. 

\vspace{0.1in}

 Denote $h_m = h_p(x_m)$ and $r_m = r(x_m)$. By Bishop-Gromov volume comparison theorem, $$\displaystyle{\frac{Vol(B_{\Sigma^c}(p, r_m - h_m))}{Vol(B_{\Sigma^c}(p, r_m + h_m))} \geq \left[\frac{r_m - h_m}{r_m + h_m}\right]^n }$$

It is easy to verify that \begin{eqnarray}B(x_m, h_m)\subset B_{\Sigma^c}(p,r_m + h_m)\backslash B_{\Sigma^c}(p,r_m - h_m)\end{eqnarray}

and that

\begin{eqnarray} Vol(B(x_m, h_m) &\leq& Vol(B_{\Sigma^c}(p,r_m + h_m)) - Vol( B_{\Sigma^c}(p,r_m - h_m)) \nonumber \\ &\leq& \left\{1 - \left[\frac{r_m - h_m}{r_m + h_m}\right]^n\right\}Vol(B_{\Sigma^c}(p,r_m + h_m)) \nonumber\\ & \leq & Vol(B_{\Sigma^c}(p, 3h_m + a))  \end{eqnarray}

\vspace{0.1in} 

 Inequality 2.6 is due to the fact that $h\leq r$ and $$r_p(x) - b_p(x)\leq2 h_p(x)$$

 In particular \begin{eqnarray*}r_p(x) + h(x) \leq 3h(x) + a , \ \text{when $x\in b_p^{-1}(a)$}\end{eqnarray*}

 Now, denote $r_1(x) = d(x_1,x)$. By triangle inequality and (2.4),

\begin{eqnarray}\lim_{m\to\infty}\text{sup} \ \frac{r_1(x_m)}{h_p(x_m)} & \leq &\lim_{m\to\infty}\text{sup} \ \frac{r_1(p)}{h_p(x_m)} + \lim_{l\to\infty}\text{sup}\ \frac{r_p(x_m)}{h_p(x_m)}\nonumber \\ & \leq & 2\end{eqnarray}

 Also note that 

\begin{eqnarray}B(x_1, h_m)\subset B(x_m, h_m + r_1(x_m)) \end{eqnarray}

 By volume comparison theorem we obtain

\begin{eqnarray} Vol(B(x_m,h_m))\geq \left[\frac{h_m}{h_m + r_1(x_m)}\right]^nVol(B(x_m,h_m + r_1(x_m))\end{eqnarray}

 Denote $f_p(r) = Vol(B(p,r)) \text{for a fixed $p\in M$}$. 
 From (2.7), (2.8), and (2.9), we have

\begin{eqnarray}&\underset{m \to \infty}{\lim}&\text{inf}\frac{Vol(B(x_m, h_m))}{f_p(h_m)} \nonumber\\ &\geq &  \lim_{m \to \infty}\text{inf}\left[\left(\frac{h_m}{(h_m + r_1(x_m))}\right)^n\frac{Vol(B(x_m, h_m + r_1(x_m)))}{f_p(h_m)} \right] \nonumber \\ &\geq &  \lim_{m \to \infty}\text{inf}\left[\left(\frac{h_m}{(h_m + r_1(x_m))}\right)^n\frac{Vol(B(x_1, h_m))}{f_p(h_m)} \right]\nonumber \\ & \geq & \displaystyle{\underset{m\to\infty}{\lim}\text{inf}\frac{1}{\left(1 + \frac{r_1(x_m)}{h_m}\right)^n}}\lim_{m \to \infty}\text{inf}\frac{Vol(B(x_1, h_m))}{f_p(h_m)}\nonumber \\ &\geq & 3^{-n}\end{eqnarray}

The last inequality is due to the fact that the volume growth $$\lim_{m \to \infty}\text{inf}\ [Vol(B(x_1, h_m))][f_p(h_m)]^{-1}$$

 is independent of the base point $x_1$.

From inequalities (2.6), (2.10), and the volume comparison theorem, we have 

\begin{eqnarray} 3^{-n} &\leq& \lim_{m \to \infty}\text{inf}\frac{Vol(B(x_m, h_m))}{f_p(h_m)}\nonumber \\ &\leq &  \lim_{m\to\infty}\text{inf}\frac{Vol(B_{\Sigma^c}(p, 3h_m + a))}{f_p(h_m)} \nonumber \\ &\leq & 3^n \lim_{m\to\infty}\text{inf}\frac{Vol(B_{\Sigma^c}(p, h_m))}{f_p(h_m)} \end{eqnarray}

 Which leads to the inequality \begin{eqnarray}\lim_{m\to\infty}\text{inf} \ [Vol(B_{\Sigma^c}(p, h_m))][f_p(h_m)]^{-1}\geq 9^{-n}\end{eqnarray}

 Since $$Vol(B(p,r) = Vol(B_{\Sigma}(p,r)) + Vol(B_{\Sigma^c}(p,r)),$$ the volume growth condition assumption implies that \begin{eqnarray} \lim_{r\to\infty}\text{inf}\ [Vol(B_{\Sigma^c}(p, r))][f_p(r)]^{-1} < 9^{-n}\end{eqnarray}

 Evidently, inequality (2.12) contradicts inequality (2.13). Hence $b_p^{-1}(t)$ must be compact.

 \end{proof}

 \begin{proof}[Proof of Theorem \ref{thm:cute}] The $Ricci$ curvature is nonnegative and positive outside a compact set because the bisectional curvature is assumed. The Busemann function $b_p$ is a continuous plurisubharmonic exhaustion by lemma 1.5 and a result by H.Wu in [9]. In the same paper (Theorem C [9]), it follows that there exist a strictly plurisubharmonic exhaustion function. This completes the proof.

\end{proof}

\section{Applications}

Let $H_k(M,\mathbb{Z})$ denote the k-th singular homology group of $M$ with integer coefficients. It is well known that if $M$ is a complete proper Riemannian n-dimensional manifold with $Ric_M\geq 0$, then using Morse theorem, $M$ has the homotopy type of a CW complex with cells each of dimension $\leq n-2$ and $H_i(M,\mathbb{Z}) =0$, $i\geq n -1$. ([6], [4])

As an application of lemma 1.5, we have the following result.

\begin{corollary}
Let $(M,g)$ be a complete open manifold with $Ric_M\geq 0$ .  If  $$Vol(B_{\Sigma}(p,r))\sim_a Vol(B(p,r)),$$ 
where $[9^n - 1]9^{-n}<a\leq1$, then $M$ has the homotopy type of a $CW$ complex with cells each of dimension $\leq n-2$. In particular, $H_i(M,\mathbb{Z}) =0$, $i\geq n -1$
\end{corollary}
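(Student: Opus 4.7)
The plan is to chain Lemma~\ref{lemm:cute} with the Morse-theoretic background result recalled at the start of this section. First, I would apply Lemma~\ref{lemm:cute} directly to $(M,g)$: the assumption $\text{Ric}_M \geq 0$ together with $Vol(B_{\Sigma}(p,r)) \sim_a Vol(B(p,r))$ for some $[9^n - 1]9^{-n} < a \leq 1$ is exactly its input, so every sublevel set $b_p^{-1}(t)$ is compact and the Busemann function $b_p : M \to \mathbb{R}$ is a continuous exhaustion.

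Second, once $b_p$ is known to be an exhaustion, the Morse-theoretic argument cited in references [4] and [6] at the opening of this section produces a CW decomposition of $M$ in which every cell has dimension at most $n-2$. The standard implementation is to approximate $b_p$ by a smooth proper function whose critical points are controlled by the rays emanating from $p$, so that the Morse index is bounded by $n-2$; one then quotes ordinary Morse theory. For the present corollary it suffices to invoke the final packaging of that argument, since the only hypothesis missing in the general Ricci setting is precisely the properness of $b_p$, which Lemma~\ref{lemm:cute} has just supplied.

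Third, cellular homology applied to such a CW complex immediately yields $H_i(M,\mathbb{Z}) = 0$ for every $i \geq n-1$, since there are simply no cells in those dimensions. I do not anticipate any substantive obstacle: the real work has been absorbed into Lemma~\ref{lemm:cute}, and the only thing left to check is that the constant $[9^n - 1]9^{-n}$ appearing in the hypothesis of the corollary matches the constant in Lemma~\ref{lemm:cute} exactly, which it does by construction.
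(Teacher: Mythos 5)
Your proposal matches the paper's (largely implicit) argument exactly: Lemma~\ref{lemm:cute} supplies the properness of the Busemann function $b_p$, and the Morse-theoretic result recalled at the start of the section (citing [4], [6]) then gives the CW structure with cells of dimension at most $n-2$ and hence $H_i(M,\mathbb{Z})=0$ for $i\geq n-1$. The only nitpick is terminological: $b_p^{-1}(t)$ is a level set, not a sublevel set, though passing from compact level sets to $b_p$ being an exhaustion is exactly how the paper itself uses Lemma~\ref{lemm:cute}.
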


It is also known that if $M$ is a Stein manifold of n-complex dimension, then the homology groups $H_k(M,\mathbb{Z})$ are zero if $k>n$ and $H_n(M,\mathbb{Z})$ is torsion free  (theorem 1 [1]), [2]. As an application of theorem 1.6, we have the following result.

\begin{corollary}
Let $M$ be a complete open K$\ddot{a}$hler manifold with nonnegative bisectional curvature. If the followings holds
\begin{itemize}
\item[(a)]The bisectional curvature is positive outside a compact set
\item[(b)] $$Vol(B_{\Sigma}(p,r))\sim_a Vol(B(p,r)),$$ 

where $[9^{2n} - 1]9^{-2n}<a\leq1$ and $n = dim_{\mathbb{C}}M$
\end{itemize} then $$H_k(M,\mathbb{Z}) = 0, \ \text{for} \ k>n$$ and $H_{n}(M,\mathbb{Z})$ is torsion free
\end{corollary}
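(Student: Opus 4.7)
The plan is to deduce the homological conclusions from Theorem~1.6 together with the classical theorem of Andreotti and Frankel on the topology of Stein manifolds. Since the hypotheses of Corollary~3.2 are verbatim those of Theorem~1.6, I would begin by invoking Theorem~1.6 to conclude that $M$ is a Stein manifold of complex dimension $n$. This is the one place where the curvature and volume-growth conditions enter; from here on the argument is purely topological.

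Next, I would apply the result stated as Theorem~1 in [1] (and reproduced in [2]), which asserts that any Stein manifold of complex dimension $n$ has the homotopy type of a CW complex of real dimension at most $n$. The idea behind this result is to take a strictly plurisubharmonic exhaustion function $\varphi$ on $M$ (furnished by Steinness), perturb it to a Morse function, and observe that strict plurisubharmonicity forces the Hessian of $\varphi$ at each critical point to have at least $n$ positive eigenvalues, so the Morse index is at most $n$. A standard Morse-theoretic attaching argument then produces the CW structure.

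The two stated conclusions are now immediate consequences of this CW structure. For $k > n$ there are no cells of dimension $k$, hence the cellular chain group $C_k$ vanishes and $H_k(M,\mathbb{Z}) = 0$. For $k = n$, there are no $(n+1)$-cells, so $H_n(M,\mathbb{Z})$ coincides with the kernel of the cellular boundary $\partial_n : C_n \to C_{n-1}$. As a subgroup of the free abelian group $C_n$, this kernel is itself free abelian, and in particular torsion-free.

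I do not foresee any genuine obstacle: the substance of the corollary is that Theorem~1.6 unlocks the Andreotti--Frankel package, after which the homological consequences reduce to a formal exercise in cellular homology. The only care required is to confirm that the hypotheses of Theorem~1.6 match those assumed in Corollary~3.2, which they do verbatim, so the deduction is a one-line application of Theorem~1.6 followed by a citation of [1] and [2].
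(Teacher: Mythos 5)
Your proposal is correct and follows exactly the route the paper intends: invoke Theorem~1.6 to conclude $M$ is Stein, then cite the Andreotti--Frankel theorem ([1], [2]) for the vanishing of $H_k(M,\mathbb{Z})$ for $k>n$ and the torsion-freeness of $H_n(M,\mathbb{Z})$. The paper offers no written proof beyond this citation, so your added Morse-theoretic and cellular-homology details only make the argument more complete.
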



%
%
%

\vspace{0.2in}

\noindent Acknowledgement: Thanks to Professor Bun Wong for his advice.

\end{document}